\newcounter{lemma}[section]
\newcounter{corollary}[section]
\newcounter{remark}[section]
\newcounter{theorem}[section]
\newcounter{proposition}[section]
\numberwithin{equation}{section}
\begin{document}

\markboth{\centerline{E.~SEVOST'YANOV, S.~SKVORTSOV}}
{\centerline{ON BEHAVIOR OF HOMEOMORPHISMS... }}

\def\cc{\setcounter{equation}{0}
\setcounter{figure}{0}\setcounter{table}{0}}

\overfullrule=0pt


\author{{E.~SEVOST'YANOV, S.~SKVORTSOV}\\}

\title{
{\bf ON BEHAVIOR OF HOMEOMORPHISMS WITH INVERSE MODULUS CONDITIONS}}

\date{\today}
\maketitle

\begin{abstract} We consider some class of homeomorphisms of domains of Euclidean
space, which are more general than quasiconformal mappings. For
these homeomorphisms, we have obtained theorems on local behavior of
it's inverse mappings in a given domain. Under some additional
conditions, we proved results about behavior of mappings mentioned
above in the closure of the domain.
\end{abstract}

\bigskip
{\bf 2010 Mathematics Subject Classification: Primary 30C65;
Secondary 32U20, 31B15}

\section{Introduction}  In Euclidean space, questions connected with the equicontinuity
of quasiconformal mappings and some of their generalizations are
relatively well studied (see., e.g., \cite[Theorem~19.2]{Va},
\cite[Theorem~3.17]{MRV$_2$} and \cite[Lemma~3.12,
Corollary~3.22]{RSY}). The behavior of such classes is also
investigated when this domain is closed (see., e.g.,
\cite[Theorem~3.1]{NP} and \cite[Theorem~3.1]{NP$_1$}). The passage
to inverse mappings in the latter case does not present
difficulties, since, as it is known, the
qua\-si\-con\-for\-ma\-li\-ty of a direct mapping $f$ implies the
quasiconformality of the mapping $f^{\,-1}$ (moreover, the
qua\-si\-con\-for\-ma\-li\-ty constant of the mappings is one and
the same, see. e.g., \cite[Corollary~13.3]{Va}; see. also
\cite[Theorem~34.3]{Va}). In other words, the study of mappings,
inverse to quasiconformal, does not bring anything new in comparison
with investigation of quasiconformal mappings.

The situation essentially changes if instead of quasiconformal
mappings we consider some more general class of homeomorphisms. Let
$M$ means modulus of curve family (see \cite{Va}) and $dm(x)$
corresponds to Lebesque measure in ${\Bbb R}^n.$ Suppose that
mapping $f:D\rightarrow {\Bbb R}^n,$ is defined in domain $D\subset
{\Bbb R}^n,$ $n\geqslant 2,$ and it is satisfying
\begin{equation} \label{eq2*!}
M(f(\Gamma))\leqslant \int\limits_D Q(x)\cdot \rho^n (x)\,
dm(x)\quad \forall\,\,\rho\in {\rm adm\,}\Gamma
\end{equation}
where $Q: D\rightarrow [1,\infty]$ is a certain (given) fixed
function (see, e.g., \cite{MRSY$_4$}). Recall that $\rho\in {\rm
adm}\,\Gamma$ if and only if
%
$$\int\limits_{\gamma}\rho (x)|dx|\geqslant 1\quad
\forall\,\,\gamma\,\in \Gamma\,.$$
%
%
In particular, all conformal and quasiconformal mappings satisfy
(\ref{eq2*!}), where function $Q$ equals 1 or some constant,
respectively (see, e.g., \cite[Theorems~4.6 and 6.10]{MRSY$_1$}).
Note that in case of particular (unbounded) function $Q$ we,
generally speaking, can not replace $f$ by $f^{\,-1}$ in
(\ref{eq2*!}). (For this occasion, see the example~2,
 cited at the end of this work). The study of mappings $g,$
the inverses of which satisfy the relation (\ref{eq2*!}) is a
separate topic for research. In this note we are interested in the
local behavior of such mappings $g$ in the domain
$D^{\,\prime}=f(D),$ $f=g^{\,-1},$ and also in
$\overline{D^{\,\prime}}.$

It is necessary to take into the early results of the first
author~\cite{Sev$_3$}, where mappings $g$ with similar conditions
were also studied. The main result is contained in
\cite[Theorem~6.1]{Sev$_3$} and it is proved under the condition
that two points of the domain are fixed by mappings, that it is
difficult to call an optimal constraint. In particular, among linear
fractional automorphisms of the unit circle onto itself is at most
one such mapping, in view of which the indicated condition turns out
to be meaningless. Our main goal is to study analogous families of
mappings with a rejection of any conditions normalization. As
example~1 shows at the end of the paper, it essentially enriches the
results obtained in the article from the point of view of
applications.

\medskip
Main definition and denotes used below can be found in monographs
\cite{Va} and \cite{MRSY} and therefore omitted. Let $E,$ $F\subset
\overline{{\Bbb R}^n}$ are arbitrary sets. Further $\Gamma(E,F,D)$
we denote the family of all path
$\gamma:[a,b]\rightarrow\overline{{\Bbb R}^n},$ that connect $E$ and
$F$ in $D,$ i.e $\gamma(a)\in E,\,\gamma(b)\in F$ и $\gamma(t)\in D$
for $t\in(a,\,b).$ Recall that the domain $D\subset {\Bbb R}^n$ is
called {\it locally connected at the point} $x_0\in\partial D,$ if
for every neighborhood $U$ of a point $x_0$ there is a neighborhood
$V\subset U$ of a point $x_0$ such that $V\cap D$ is connected. The
domain $D$ is locally connected in the $\partial D,$ if $D$ is
locally connected at every point $x_0\in\partial D.$ The boundary of
$D$ is called {\it weakly flat} at a point $x_0\in
\partial D,$ if for every $P>0$ and every neighborhood $U$
of the point $x_0,$ there is a neighborhood $V\subset U$ of $x_0$
such that $M(\Gamma(E, F, D))>P$ for all continua $E, F\subset D,$
intersecting $\partial U$ and $\partial V.$ The boundary of the
domain $D$ is weakly flat, if it is weakly flat at every point of
boundary of $D.$

\medskip For domains $D, D^{\,\prime}\subset {\Bbb R}^n,$ $n\geqslant 2,$
and arbitrary Lebesgue measurable function $Q: {\Bbb R}^n\rightarrow
[1, \infty],$ $Q(x)\equiv 0$ for $x\not\in D,$ denote ${\frak
R}_Q(D, D^{\,\prime})$ the family of all mappings
$g:D^{\,\prime}\rightarrow D$ such that $f=g^{\,-1}$ is
homeomorphism of the domain $D$ onto $D^{\,\prime}$
satisfying(\ref{eq2*!}). The following assertion is valid.

\begin{theorem}\label{th1}
{\sl Suppose that $\overline{D}$ and $\overline{D^{\,\prime}}$ are a
compacts in ${\Bbb R}^n.$  If $Q\in L^1(D),$ then the family ${\frak
R}_Q(D, D^{\,\prime})$ is equicontinuous in $D^{\,\prime}.$ }
\end{theorem}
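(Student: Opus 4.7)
The plan is to argue by contradiction. Suppose $\mathfrak{R}_Q(D,D')$ fails to be equicontinuous at some $y_0\in D'$; then there exist $\varepsilon_0>0$, a sequence $g_k\in\mathfrak{R}_Q(D,D')$, and points $y_k\to y_0$ with $|g_k(y_k)-g_k(y_0)|\geqslant\varepsilon_0$ for every~$k$. Writing $x_k=g_k(y_k)$, $x_k^{*}=g_k(y_0)$, and $f_k=g_k^{\,-1}$, the compactness of $\overline{D}$ allows us to pass to a subsequence along which $x_k\to a$ and $x_k^{*}\to a^{*}$ in $\overline{D}$, with $|a-a^{*}|\geqslant\varepsilon_0$.

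Fix $\delta\in(0,\varepsilon_0/4)$ and choose disjoint continua $E,F\subset D$ of diameter at most $2\delta$ with $a^{*}\in E$ and $a\in F$, so that $\mathrm{dist}(E,F)\geqslant\varepsilon_0/2$. For $k$ sufficiently large we have $x_k^{*}\in E$ and $x_k\in F$, hence $y_0\in f_k(E)$ and $y_k\in f_k(F)$. The constant function $\rho\equiv 2/\varepsilon_0$ on $D$ is admissible for $\Gamma(E,F,D)$, since every curve joining $E$ and $F$ in $D$ has length at least $\varepsilon_0/2$. Inserting this $\rho$ into~(\ref{eq2*!}) yields the $k$-independent upper bound
\begin{equation*}
M\bigl(\Gamma(f_k(E),f_k(F),D')\bigr)=M\bigl(f_k(\Gamma(E,F,D))\bigr)\leqslant\left(\tfrac{2}{\varepsilon_0}\right)^{n}\|Q\|_{L^{1}(D)}=:C_0.
\end{equation*}

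The contradiction will come from an opposing lower estimate $M(\Gamma(f_k(E),f_k(F),D'))\to+\infty$. Since $y_0\in f_k(E)$ and $y_k\in f_k(F)$, we have $\mathrm{dist}(f_k(E),f_k(F))\leqslant|y_k-y_0|\to 0$, and the Loewner-type lower bound for moduli of capacitors in bounded subdomains of $\mathbb{R}^n$ does force such blow-up \emph{provided} both $\mathrm{diam}\,f_k(E)$ and $\mathrm{diam}\,f_k(F)$ remain bounded below by some positive constant, independent of~$k$. Securing this diameter bound is the main obstacle of the proof: a priori, say $f_k(E)$ could collapse onto $\{y_0\}$ as $k\to\infty$, which corresponds to $g_k$ mapping a vanishing neighbourhood of $y_0$ onto a set containing the macroscopic continuum $E$. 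I expect to rule out this degeneration by a second application of~(\ref{eq2*!}), now to a spherical-ring family $\Gamma(\overline{B(y_0,\tau_k)},D'\setminus B(y_0,\rho_0),D')$ in $D'$ with $\tau_k=\mathrm{diam}\,f_k(E)\to 0$ and a fixed $\rho_0<\mathrm{dist}(y_0,\partial D')$; the admissible test function $\rho=\chi_D/\mathrm{dist}(g_k(\overline{B(y_0,\tau_k)}),D\setminus g_k(B(y_0,\rho_0)))$, together with the absolute continuity of $\int_D Q\,dm$ (available because $Q\in L^{1}(D)$) and the boundedness of $D$, yields the required contradiction. Combining the two cases closes the argument and establishes the claimed equicontinuity at every $y_0\in D'$.
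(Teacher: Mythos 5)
Your overall strategy (argue by contradiction, use a constant admissible metric $\rho\equiv{\rm const}$ together with $Q\in L^1(D)$ to get a $k$-independent upper bound on $M(f_k(\Gamma(E,F,D)))$, and play it against a blow-up of the modulus of $\Gamma(f_k(E),f_k(F),D^{\,\prime})$ as the image continua approach each other near the interior point $y_0$) is exactly the paper's. However, the step you yourself call ``the main obstacle'' --- a positive lower bound on ${\rm diam}\,f_k(E)$ and ${\rm diam}\,f_k(F)$ uniform in $k$ --- is left genuinely open, and the fix you sketch cannot work. The hypothesis (\ref{eq2*!}) bounds the modulus of an \emph{image} family only from \emph{above}; applied to your ring family $\Gamma(\overline{B(y_0,\tau_k)},D^{\,\prime}\setminus B(y_0,\rho_0),D^{\,\prime})=f_k(\Gamma_k^{\,\prime})$ it yields $M(\cdot)\leqslant d_k^{-n}\int_D Q\,dm$ with $d_k={\rm dist}\,(g_k(\overline{B(y_0,\tau_k)}),D\setminus g_k(B(y_0,\rho_0)))\leqslant{\rm diam}\,D$, i.e.\ a right-hand side bounded \emph{below} by a positive constant, while the left-hand side tends to $0$ as $\tau_k\to 0$ (a ring family joining a ball of radius $\tau_k$ to the complement of a fixed ball has modulus of order $(\log(\rho_0/\tau_k))^{1-n}$). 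The two sides are perfectly consistent, so no contradiction arises: nothing in (\ref{eq2*!}) forbids $g_k$ from blowing a set of diameter $\tau_k\to0$ up to the macroscopic continuum $E$ --- that is precisely the behavior the theorem is trying to exclude, and it cannot be ruled out by another application of the same one-sided inequality. (A secondary defect: the limits $a,a^{*}$ of $g_k(y_k),g_k(y_0)$ may lie on $\partial D$, in which case fixed continua $E,F\subset D$ containing neighbourhoods of them, and hence containing $x_k,x_k^{*}$ for all large $k$, need not exist.)

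The paper's device for the diameter bound is the one ingredient you are missing, and it is elementary: instead of fixed small continua it takes, for each $m$, the two rays of the line through $g_m(y_m)$ and $g_m(y_0)$ issuing from these points away from each other, followed until they first meet $\partial D$, and truncated at points $z_i^m$ with ${\rm dist}\,(f_m(z_i^m),\partial D^{\,\prime})<1/m$; such points exist because the cluster set of the homeomorphism $f_m$ at a boundary point of $D$ lies in $\partial D^{\,\prime}$. The images of the resulting continua $P_m,Q_m$ then join $y_m$ (respectively $y_0$) to points converging to $\partial D^{\,\prime}$, so their diameters are automatically bounded below by roughly $\tfrac12\,{\rm dist}\,(y_0,\partial D^{\,\prime})>0$, while ${\rm dist}\,(P_m,Q_m)\geqslant\varepsilon_0$ still holds because $P_m$ and $Q_m$ lie on the common line on opposite sides of the segment $[g_m(y_0),g_m(y_m)]$, whose length is at least $\varepsilon_0$. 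With that substitution your uniform upper bound collides with the lower bound supplied by Lemma~\ref{lem2} and the proof closes.
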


For the number $\delta>0,$ domains $D$ and $D^{\,\prime}\subset
{\Bbb R}^n,$ $n\geqslant 2,$ continuum $A\subset D$ and arbitrary
Lebesgue measurable function $Q: {\Bbb R}^n\rightarrow [1, \infty],$
$Q(x)\equiv 0$ for $x\not\in D,$ denote by ${\frak S}_{\delta, A, Q
}(D, D^{\,\prime})$ the family of all mappings
$g:D^{\,\prime}\rightarrow D$ such that $f=g^{\,-1}$ is a
homeomorphism of the domain $D$ onto $D^{\,\prime}$ satisfying
(\ref{eq2*!}), wherein ${\rm diam}\,f(A)\geqslant\delta.$ The
following assertion is valid.

\begin{theorem}\label{th2}
{\sl Suppose that the domain $D$ is locally connected at all
boundary points, $\overline{D}$ and $\overline{D^{\,\prime}}$ are
compacts in ${\Bbb R}^n,$ and the domain $D^{\,\prime}$ has a weakly
flat boundary. We also suppose that any path-connected component
$\partial D^{\,\prime}$ is non-degenerate continuum. If $Q\in
L^1(D),$ then each mapping $g\in {\frak S}_{\delta, A, Q }(D,
D^{\,\prime})$ extends by continuity to the mapping
$\overline{g}:\overline{D^{\,\prime}}\rightarrow \overline{D},$
$\overline{g}|_{D^{\,\prime}}=g,$ in addition,
$\overline{g}(\overline{D^{\,\prime}})=\overline{D}$ and family
${\frak S}_{\delta, A, Q }(\overline{D}, \overline{D^{\,\prime}}),$
consisting of all extended mappings
$\overline{g}:\overline{D^{\,\prime}}\rightarrow \overline{D},$ is
equicontinuous in $\overline{D^{\,\prime}}.$ }
\end{theorem}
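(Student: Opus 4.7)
The plan is to break the conclusion into four tasks: (a) for each $g\in\mathfrak{S}_{\delta,A,Q}(D,D')$ and $y_0\in\partial D'$, every cluster value of $g$ at $y_0$ lies in $\partial D$; (b) this cluster set is a singleton, so $g$ extends continuously to $\overline{g}:\overline{D'}\to\overline{D}$; (c) the resulting family $\{\overline{g}\}$ is equicontinuous on $\overline{D'}$; (d) each $\overline{g}$ is surjective onto $\overline{D}$. Task (a) is almost automatic: if $g(y_k)\to x_0\in D$ for some $y_k\to y_0\in\partial D'$, then continuity of $f=g^{-1}$ at $x_0$ gives $y_k=f(g(y_k))\to f(x_0)\in D'$, contradicting $y_k\to y_0$. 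Task (d) is then standard: $\overline{g}(\overline{D'})$ is compact, contains $g(D')=D$, and lies in $\overline{D}$ by (a)--(b), so it equals $\overline{D}$. Interior equicontinuity in (c) is immediate from Theorem~\ref{th1}; only boundary equicontinuity remains.

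For task (b) I argue by contradiction. Suppose $g$ has two cluster values $x_1\neq x_2$ at $y_0$, necessarily in $\partial D$. By local connectedness of $D$ at $x_1,x_2$ (and the fact that $A\subset D$ is compact with $x_i\in\partial D$), I pick disjoint open neighborhoods $W_1,W_2$ of $x_1,x_2$ with $W_i\cap D$ connected, $d:=\mathrm{dist}(W_1,W_2)>0$, and $A\cap(W_1\cup W_2)=\emptyset$. Along subsequences $y_k^{(i)}\to y_0$ with $g(y_k^{(i)})\in W_i\cap D$, I join $g(y_k^{(i)})$ to a fixed auxiliary point $a_i\in W_i\cap D$ by a path inside $W_i\cap D$, obtaining continua $K_{i,k}\subset W_i\cap D$. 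The image $f(K_{i,k})\subset D'$ contains $y_k^{(i)}$ (approaching $y_0$) and the fixed point $f(a_i)\in D'$ (at positive distance from $y_0$), so $f(K_{i,k})$ meets $\partial U$ and $\partial V$ for every sufficiently small pair of nested neighborhoods $V\subset U$ of $y_0$. Weak flatness of $\partial D'$ at $y_0$ then yields $M(\Gamma(f(K_{1,k}),f(K_{2,k}),D'))>P$ for any prescribed $P$, while the function $\rho=d^{-1}\chi_D$ is admissible for $\Gamma(K_{1,k},K_{2,k},D)$ (since every curve joining $W_1$ and $W_2$ has length $\geq d$), and (\ref{eq2*!}) therefore gives $M(f(\Gamma(K_{1,k},K_{2,k},D)))\leq d^{-n}\|Q\|_{L^1(D)}<\infty$. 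Since the two sides coincide under the change of variable $y=f(x)$, this is a contradiction.

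For task (c) I argue analogously, supposing $g_m\in\mathfrak{S}_{\delta,A,Q}$, $y_m,y_m'\to y_0\in\partial D'$ with $|g_m(y_m)-g_m(y_m')|\geq\varepsilon_0$, extracting subsequential limits $x_1^{\ast},x_2^{\ast}\in\overline{D}$ with $|x_1^{\ast}-x_2^{\ast}|\geq\varepsilon_0$, and forming $W_1,W_2,d$ as in (b). \textbf{Main obstacle.} The ``fixed'' interior anchor $a_i$ used in (b) no longer serves uniformly in $m$: different $g_m$'s send the same point of $D$ to different points of $D'$, so $f_m(a_i)$ may collapse toward $y_0$ and the continua $f_m(K_{i,m})$ may fail to reach past $\partial U$ in a uniform way. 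The nondegeneracy condition $\mathrm{diam}\,f_m(A)\geq\delta$ is precisely the device that restores uniformity, producing for every $m$ a point $q_m\in A$ with $|f_m(q_m)-y_0|\geq\delta/2$, which can play the role of a far endpoint of $f_m(K_{i,m})$. Arranging the continua $K_{i,m}$ so that $K_{1,m},K_{2,m}$ remain in $W_1,W_2$ (hence at distance $\geq d$ in $D$) while their $f_m$-images still cross the weak-flatness pair $\partial U,\partial V$ around $y_0$ uniformly in $m$ is the technical crux; once this is achieved, the same $M>P$ versus $M\leq d^{-n}\|Q\|_{L^1(D)}$ dichotomy closes the contradiction.
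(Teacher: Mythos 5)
Your parts (a), (b) and (d) are sound: (a) and (d) are routine, and your direct proof of the continuous extension in (b) works because for a \emph{single} $g$ the anchors $f(a_1)$, $f(a_2)$ are genuinely fixed points of $D^{\,\prime}$ at positive distance from $y_0$, so both continua $f(K_{i,k})$ cross a fixed annulus around $y_0$ and weak flatness clashes with the bound $M(f(\Gamma))\leqslant d^{-n}\Vert Q\Vert_{L^1(D)}$. (The paper simply cites \cite[Theorem~4.6]{MRSY} for the existence of the extension, so your route is more self-contained but equivalent.) Interior equicontinuity via Theorem~\ref{th1} is also as in the paper.

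The gap is in (c), and it is exactly the step you flag as ``the technical crux'' and then do not carry out. Two things go wrong with your proposed fix. First, a single point $q_m\in A$ with $|f_m(q_m)-y_0|\geqslant\delta/2$ cannot serve as the far endpoint of \emph{both} $f_m(K_{1,m})$ and $f_m(K_{2,m})$: the two continua must stay at mutual distance $\geqslant d$ in $D$, so they cannot both pass through $q_m$, and the diameter condition by itself does not exclude that $f_m(x)\rightarrow y_0$ for every other fixed point $x\in A$. Second, you never use the hypothesis that every path-connected component of $\partial D^{\,\prime}$ is a non-degenerate continuum, which is a sign that something essential is missing. The paper's resolution is its Lemma~\ref{lem3}: under that hypothesis, ${\rm dist}\,(f_m(A),\partial D^{\,\prime})\geqslant\delta_1>0$ \emph{uniformly in} $m$. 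This is the statement that turns every point of $A$ into a valid far anchor for every $m$, so that the two disjoint continua supplied by Lemma~\ref{lem1} --- each containing one point of $A$ together with one of the points $g_m(z_m)$, $g_m(z^{\,\prime}_m)$ --- have $f_m$-images of diameter $\geqslant\delta_1/2$ reaching across the weak-flatness annulus. The proof of that lemma is itself a non-trivial modulus argument (one builds curves $\gamma_k$ near a non-degenerate boundary component $K_0$ of $\partial D^{\,\prime}$ whose $g_{m_k}$-preimages stay at distance $\geqslant\varepsilon$ from $A$, and plays weak flatness against $Q\in L^1(D)$). Without this lemma, or an equivalent uniform separation statement, the contradiction in your part (c) does not close.
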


\section{Auxiliary information}
First of all, we establish two elementary statements that play an
important role in the proof of the main results. Let $I$ be an open,
closed or half-open interval in ${\Bbb R}.$ As usual, for a curve
$\gamma: I\rightarrow {\Bbb R}^n$ suppose:
$$|\gamma|=\{x\in {\Bbb R}^n: \exists\, t\in [a, b]:
\gamma(t)=x\}\,,$$
wherein, $|\gamma|$ is called {\it carrier (image)  of the curve}
$\gamma.$ We say that the curve $\gamma$ lies in the domain $D,$ if
$|\gamma|\subset D,$ in addition, we will say that the curves
$\gamma_1$ and $\gamma_2$ do not intersect if their carriers do not
intersect. The curve $\gamma:I\rightarrow {\Bbb R}^n$ is called {\it
Jordan arc}, if $\gamma$ is a homeomorphism on $I.$ The following
(almost obvious) assertion is valid.

\begin{lemma}\label{lem1}{\sl\,
Let $D$ be a domain in ${\Bbb R}^n,$ $n\geqslant 2,$ locally
connected on its boundary.  Then any two pairs of different points
$a\in D, b\in \overline{D},$ и $c\in D, d\in \overline{D}$ can be
joined by disjoint curves $\gamma_1:[0, 1]\rightarrow \overline{D}$
and $\gamma_2:[0, 1]\rightarrow \overline{D},$  so, that
$\gamma_i(t)\in D$ for all $t\in (0, 1),$ $i=1,2,$ $\gamma_1(0)=a,$
$\gamma_1(1)=b,$ $\gamma_2(0)=c,$ $\gamma_2(1)=d.$}
\end{lemma}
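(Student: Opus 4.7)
The plan is to dispose first of the \emph{interior} case where $b,d\in D$, and then to reduce the general case to it by attaching short tails at any endpoints lying on $\partial D$, using local connectedness there. If $a,b,c,d\in D$, then since $D$ is a connected open subset of ${\Bbb R}^n$, $n\geqslant 2$, it is path-connected, and I first pick a Jordan arc $\gamma_1\subset D$ from $a$ to $b$. The key topological fact is that a compact Jordan arc does not disconnect a domain of ${\Bbb R}^n$ for $n\geqslant 2$: one covers $|\gamma_1|$ by a tubular neighborhood $N$ with $\overline N\subset D$, observes that $N\setminus|\gamma_1|$ is connected (trivial for $n\geqslant 3$; for $n=2$ the two sides of the arc are joined around either endpoint, both of which lie in $D$), and concludes that $D\setminus|\gamma_1|$ is itself a domain. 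Hence $c$ and $d$ can be joined by a Jordan arc $\gamma_2\subset D\setminus|\gamma_1|$, which settles the interior case.

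\emph{Boundary reduction.} Suppose now $b\in\partial D$, the case $d\in\partial D$ being symmetric. Since $b\neq d$, I fix disjoint open balls $U\ni b$ and $V\ni d$ with $a,c\notin\overline{U\cup V}$. Local connectedness of $D$ at $b$ yields a nested sequence $U\supset U_1\supset U_2\supset\ldots$ with each $U_k\cap D$ connected and $\mathrm{diam}(U_k)\to 0$; I do the same for $V_k$ at $d$ when needed. Choosing $b_k\in U_k\cap D$ with $b_k\to b$ and joining $b_k$ to $b_{k+1}$ by a path inside $U_k\cap D$, I concatenate to obtain a ``tail'' $\beta_1\colon[0,1]\to\overline D$ with $\beta_1(0)=b_0$, $\beta_1(1)=b$, and $\beta_1([0,1))\subset U\cap D$; an analogous $\beta_2$ joins some $d_0\in V\cap D$ to $d$ inside $V\cap D$ (taking $d_0=d$ and omitting $\beta_2$ if $d\in D$).

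\emph{Assembling.} Shrinking $U,V$ beforehand so that $D\setminus\overline U$ and $D\setminus\overline V$ are still connected subdomains containing the prescribed points (which is possible precisely by local connectedness of $D$ at $b,d$, since small enough neighborhoods do not detach portions of $D$), I apply the interior case inside the subdomain $D\setminus\overline V$ to produce $\alpha_1$ from $a$ to $b_0$, and then inside $D\setminus(\overline U\cup|\alpha_1|)$ to produce $\alpha_2$ from $c$ to $d_0$. The concatenations $\gamma_1=\alpha_1*\beta_1$ and $\gamma_2=\alpha_2*\beta_2$ lie in $\overline D$, are interior to $D$ on $(0,1)$, and are disjoint: $|\beta_1|\subset\overline U$ and $|\beta_2|\subset\overline V$ are disjoint by construction; $\alpha_1\cap|\beta_2|\subset(D\setminus\overline V)\cap\overline V=\emptyset$ and symmetrically; and $\alpha_1\cap\alpha_2=\emptyset$ by the interior case. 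The main obstacle I expect is exactly this last assembling step: one must ensure that $D\setminus\overline U$, $D\setminus\overline V$, and $D\setminus(\overline V\cup|\alpha_1|)$ remain path-connected subdomains containing the required endpoints, which forces a simultaneous use of local connectedness of $D$ at $b,d$ (to make $U,V$ small enough) and of the non-separating property of a Jordan arc in ${\Bbb R}^n$ (to extract $\alpha_2$ after $\alpha_1$ has been fixed).
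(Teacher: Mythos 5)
Your reduction to the interior case founders at the assembling step. You claim that, by local connectedness of $D$ at $b$ and $d$, the neighborhoods $U$ and $V$ can be shrunk so that $D\setminus\overline U$ and $D\setminus\overline V$ remain connected subdomains containing the prescribed points. Local connectedness at a boundary point controls the connectivity of $U\cap D$, not of its complement, and the claim is false: take $D\subset{\Bbb R}^2$ to be a dumbbell, two large disks joined by a thin open neck, and let $d$ be a boundary point in the middle of the neck. Then $D$ is locally connected at every boundary point, yet for every sufficiently small ball $V\ni d$ the set $D\setminus\overline V$ is disconnected, and $a$ (in one bell) and $b_0$ (near $b$, say in the other bell) may lie in different components; the subsequent applications of the interior case inside $D\setminus\overline V$ and inside $D\setminus(\overline U\cup|\alpha_1|)$ therefore have no foundation. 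This is exactly the difficulty the paper sidesteps by not excising neighborhoods at all: it joins $a$ to $b$ directly by a Jordan arc $\gamma_1$ reaching the (accessible) boundary point $b$ and then shows that $|\gamma_1|$ does not separate $D$ --- for $n\geqslant 3$ because a set of topological dimension $1$ cannot disconnect a domain (Hurewicz--Wallman), and for $n=2$ by first straightening $\gamma_1$ into a segment with a plane homeomorphism (Antoine's theorem) and then rerouting $\gamma_2$ around a thin rectangle enclosing the portion of the segment that it meets.

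A second, lesser, gap is your justification of the key fact that a Jordan arc does not disconnect a domain. For $n\geqslant 3$ a tubular neighborhood $N$ of $|\gamma_1|$ with $\overline N\subset D$ need not exist (wild arcs of Fox--Artin type have none); the correct argument is the dimension-theoretic one just cited. For $n=2$ the assertion that ``the two sides of the arc are joined around either endpoint'' presupposes that the arc is locally flat, which is precisely the content of the Schoenflies/Antoine theorem that the paper imports; non-separation of a planar domain by an arc is true but is a genuine theorem, not a consequence of the naive tubular picture. This part is repairable by citation, since the statement you invoke is correct, but the assembling step above needs a different argument.
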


\begin{proof}
Notice, that the points of the domain are locally connected on the
boundary and are accessible from within the domain by means of
curves (see, e.g., \cite[Proposition~13.2]{MRSY}). In this case, if
$n\geqslant 3,$ we connect the points $a$ and $b$ by an arbitrary
Jordan arc $\gamma_1$ in the domain $D,$ not passing through the
points $c$ and $d$ (which is possible in view of the local
connection of $D$ on the boundary and the transition from the curve
to the broken line if it is necessary). Then $\gamma_1$ does not
divide the domain $D$ as a set of topological dimension 1 (see
\cite[Corollary~1.5.IV]{HW}), which ensures the existence of the
desired curve $\gamma_2.$ Thus, in the case of $n\geqslant 3$ the
assertion of Lemma~\ref{lem1} is established.

Now let $n=2,$ then again the points $c$ and $d$ does not divide the
domain $D$ (\cite[Corollary~1.5.IV]{HW}). In this case, you can also
connect points $a$ and $b$ by a Jordan arc $\gamma_1$ in $D,$ that
does not pass through the points $c$ and $d.$ In view of the Antoine
theorem (see \cite[Theorem~4.3, \S\,4]{Keld}) the domain $D$ can be
mapped onto some domain $D^{\,*}$  by means of a flat homeomorphism
$\varphi:{\Bbb R}^2\rightarrow {\Bbb R}^2$ so, that
$\varphi(\gamma_1)=J$ и $J$ is a segment in $D^{\,*}.$ We also note
that the boundary points of the domain $D^{\,*}$ are reachable from
within $D^{\,*}$ by means of curves. In this way, we can connect
points $\varphi(c)$ and $\varphi(d)$ in $D^{\,*}$ by a Jordan arc
$\alpha_2:[0, 1]\rightarrow \overline{D^{\,*}},$ which lies entirely
in $D^{\,*},$ except perhaps its end point $\alpha_2(1)=\varphi(d).$

It remains to show that the curve $\alpha_2$ can be chosen so that
it does not intersect the segment $J.$ In fact, let $\alpha_2$
crosses $J,$ and let $t_1$ and $t_2$ are, respectively, the largest
and the smallest values $t\in [0, 1],$ for which $\alpha_2(t)\in
|J|.$ Suppose also that
$$J=J(s)=\varphi(a)+ (\varphi(b)-\varphi(a))s, \quad s\in [0, 1]$$
is a parametrization of the interval $J.$ Let $\widetilde{s_1}$ and
$\widetilde{s_2}\in (0, 1)$ be such that
$J(\widetilde{s_1})=\alpha_2(t_1)$ and
$J(\widetilde{s_2})=\alpha_2(t_2).$ Suppose
$s_2=\max\{\widetilde{s_1}, \widetilde{s_2}\}.$ Let
$e_1=\varphi(b)-\varphi(a)$ and $e_2$ is a unit vector, orthogonal
to $e_1,$ then the set
$$P_{\varepsilon}=\{x=\varphi(a)+
x_1e_1+x_2e_2,\quad x_1\in (-\varepsilon, s_2+\varepsilon), \quad
x_2\in (-\varepsilon, \varepsilon)\}\,,\quad \varepsilon>0\,,$$
is a rectangle containing $|J_1|,$ where $J_1$ is a restriction of
$J$ to a segment $[0, s_2]$ (see picture~\ref{fig1}).
\begin{figure}[h]
\centerline{\includegraphics[scale=0.5]{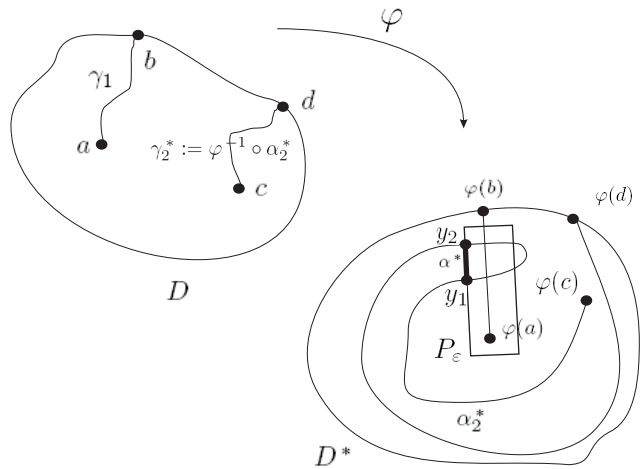}} \caption{The
possibility of connecting two pairs of points by curves in the
domain}\label{fig1}
\end{figure}
We choose that $\varepsilon>0$ so that $\varphi(c)\not\in
P_{\varepsilon},$ ${\rm dist}\,(P_{\varepsilon}, \partial
D^{\,*})>\varepsilon.$ In view of \cite[Theorem~1.I, ch.~5, \S\,
46]{Ku}) the curve $\alpha_2$ crosses $\partial P_{\varepsilon}$ for
some $T_1<t_1$ and $T_2>t_2.$ Let $\alpha_2(T_1)=y_1$ and
$\alpha_2(T_2)=y_2.$ Since $\partial P_{\varepsilon}$ is a connected
set, it is possible to connect points $y_1$ and $y_2$ of the curve
$\alpha^{\,*}(t):[T_1, T_2]\rightarrow
\partial P_{\varepsilon}.$ Finally, we put
$$\alpha_2^{\,*}(t)\quad =\quad\left\{
\begin{array}{rr}
\alpha_2(t), & t\in [0, 1]\setminus [T_1, T_2],\\
\alpha^{\,*}(t), & t\in [T_1, T_2]\end{array} \right.$$
and $\gamma^{\,*}_2:=\varphi^{\,-1}\circ \alpha_2^{\,*}.$ Then
$\gamma_1$ connects $a$ and $b$ in $D,$ and $\gamma_2^{\,*}$
connects $c$ and $d$ in $D,$ while $\gamma_1$ and $\gamma_2^{\,*}$
do not intersect, which should be established.~$\Box$
\end{proof}

\medskip
Above we introduced the concept of a weak plane of the boundary of
the region, without mentioning, at the same time, internal points.
The following lemma contains the assertion that at the indicated
points the property of the <<weak plane>> always takes place.

\begin{lemma}\label{lem2}
{\sl\, Let $D$ be a domain in ${\Bbb R}^n,$ $n\geqslant 2,$ and
$x_0\in D.$ Then for every $P>0$ and for for any neighborhood $U$ of
the point $x_0$ there is a neighborhood $V\subset U$ of the same
point such that $M(\Gamma(E, F, D))>P$ for arbitrary continua $E,
F\subset D,$ intersecting $\partial U$ and $\partial V.$}
\end{lemma}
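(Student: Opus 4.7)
The plan is to choose $V$ as a concentric ball around $x_0$ shrunk enough that any two continua meeting $\partial U$ and $\partial V$ both span a thick spherical ring around $x_0$, and then invoke the classical ring-modulus estimate. Since $x_0$ is interior to $D$, first pick $r_0>0$ with $\overline{B(x_0,r_0)}\subset U\cap D$; the neighborhood $V$ will be $B(x_0,r_1)$ for some $r_1\in(0,r_0)$ to be fixed at the end.

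For arbitrary continua $E,F\subset D$ meeting $\partial U$ and $\partial V=\partial B(x_0,r_1)$, each contains a point on $\partial B(x_0,r_1)$ and a point outside $\overline{B(x_0,r_0)}$ (since $\partial U$ lies entirely outside $\overline{B(x_0,r_0)}$). By Kuratowski's theorem on continua crossing a closed set (the same tool applied in Lemma~\ref{lem1}), $E$ and $F$ each contain subcontinua $\widetilde{E},\widetilde{F}$ lying in the closed ring $\overline{A}$ with $A:=B(x_0,r_0)\setminus\overline{B(x_0,r_1)}$, and meeting both boundary spheres. Since $A\subset D$ and every curve of $\Gamma(\widetilde{E},\widetilde{F},A)$ is a curve of $\Gamma(E,F,D)$, monotonicity of modulus gives
$$M(\Gamma(E,F,D))\;\geq\;M(\Gamma(\widetilde{E},\widetilde{F},A)).$$

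The main step is a lower bound for the right-hand side that blows up as $r_1\to 0$. Here I would invoke the classical ring-modulus inequality: whenever two continua $\widetilde{E},\widetilde{F}$ join the two boundary spheres of $A(x_0,r_1,r_0)$, one has
$$M(\Gamma(\widetilde{E},\widetilde{F},A))\;\geq\;c_n\,\log\frac{r_0}{r_1}$$
for a dimensional constant $c_n>0$. The most self-contained proof slices $A$ into a chain of $N\asymp\log(r_0/r_1)$ disjoint geometrically-scaled subrings of bounded conformal modulus; in each subring both $\widetilde{E}$ and $\widetilde{F}$ still join the boundary spheres, so the Loewner property of $\mathbb{R}^n$ yields a uniform lower bound there, and superadditivity of modulus over the disjoint subring families gives the claimed logarithmic growth. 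Alternatively, the estimate is recorded as a standard ring inequality in \cite{MRSY} and may be derived via spherical symmetrization as in \cite{Va}.

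With this estimate in hand, pick $r_1<r_0\,e^{-P/c_n}$, so that $c_n\log(r_0/r_1)>P$, and set $V=B(x_0,r_1)$; this $V$ meets the requirement. The main obstacle is justifying the ring-modulus lower bound above; the geometric reduction to it is entirely routine.
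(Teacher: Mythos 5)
Your proposal is correct and follows essentially the same route as the paper: shrink to a ball $\overline{B(x_0,r_0)}\subset U\cap D$, use Kuratowski's crossing theorem to see that the continua meet both boundary spheres of the ring, apply the standard lower bound $c_n\log(r_0/r_1)$ for the modulus of curves joining two such continua (Väisälä, 10.11--10.12), and choose $r_1$ so that this exceeds $P$. The only difference is cosmetic: you sketch a Loewner-type proof of the ring estimate, while the paper simply cites it.
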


\begin{proof}
Let $U$ be an arbitrary neighborhood of $x_0.$ Let`s choose
$\varepsilon_0>0$ so that $\overline{B(x_0, \varepsilon_0)}\subset
D\cap U.$ Let $c_n$ be a positive constant, defined in the
relation~(10.11) in \cite{Va}, and the number $\varepsilon\in(0,
\varepsilon_0)$ is so small that
$c_n\cdot\log\frac{\varepsilon_0}{\varepsilon}>P.$ Suppose
$V:=B(x_0, \varepsilon).$ Let $E, F$ be arbitrary continua
intersecting $\partial U$ and $\partial V,$ then also $E$ and $F$
intersecting $S(x_0, \varepsilon_0)$ and $\partial V$ (see
\cite[Theorem~1.I, ch.~5, \S\, 46]{Ku}). The necessary conclusion
follows on the basis of~\cite[par.~10.12]{Va}, because the
$$M(\Gamma(E, F, D))\geqslant c_n\cdot\log\frac{\varepsilon_0}{\varepsilon}>P\,.\quad \Box$$
\end{proof}

\section{Proof of Theorem~\ref{th1}} We prove the theorem~\ref{th1} by contradiction. Suppose, the family ${\frak
R}_Q(D, D^{\,\prime})$ is not equ\-i\-con\-ti\-nuous at some point
$y_0\in D^{\,\prime},$ in other words, there are $y_0\in
D^{\,\prime}$ and $\varepsilon_0>0,$ such that for any $m\in {\Bbb
N}$ there exists an element $y_m\in D^{\,\prime},$ $|y_m-y_0|<1/m,$
and a homeomorphism $g_m\in{\frak R}_Q(D, D^{\,\prime}),$ for which
\begin{equation}\label{eq13***}
|g_m(y_m)-g_m(y_0)|\geqslant \varepsilon_0\,.
\end{equation}
We draw a line $r=r_m(t)=g_m(y_0)+(g_m(y_m)-g_m(y_0))t,$
$-\infty<t<\infty$ through $g_m(y_m)$ and $g_m(y_0)$
(see~picture~\ref{fig2}).
\begin{figure}[h]
\centerline{\includegraphics[scale=0.6]{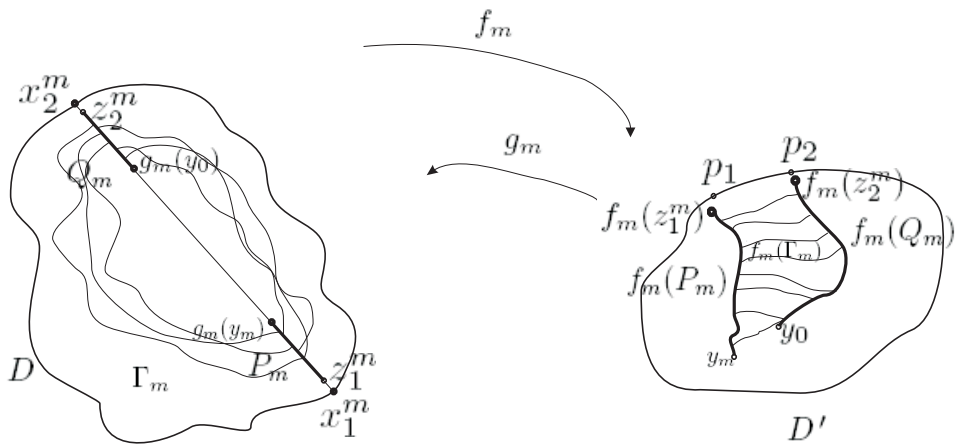}} \caption{To
the proof of the theorem~\ref{th1}}\label{fig2}
\end{figure}
Note that this line $r=r_m(t)$ for $t\geqslant 1$ must intersect the
domain $D$ in view of \cite[Theorem~1.I, ch.~5, \S\, 46]{Ku}), since
the domain $D$ is bounded; thus, there exists $t_1^m\geqslant 1$
such that $r_m(t^m_1)=x^m_1\in
\partial D.$ Without loss of generality we can assume that $r_m(t)\in
D$ for all $t\in [1, t^m_1),$  then the segment
$\gamma^m_1(t)=g_m(y_0)+(g_m(y_m)-g_m(y_0))t,$ $t\in [1, t^m_1],$
belongs to $D$ for all $t\in [1, t^m_1),$
$\gamma^m_1(t^m_1)=x^m_1\in
\partial D$ and $\gamma^m_1(1)=g_m(y_m).$
In view of analogous considerations, there are $t^m_2<0$ and a
segment $\gamma^m_2(t)=g_m(y_0)+(g_m(y_m)-g_m(y_0))t,$ $t\in [t^m_2,
0],$ such that $\gamma^m_2(t^m_2)=x^m_2\in
\partial D,$ $\gamma^m_2(0)=g_m(y_0)$ и $\gamma^m_2(t)$
belongs to $D$ for all $t\in (t^m_2, 0].$ Put $f_m:=g_m^{\,-1}.$
Since $f_m$ is a homeomorphism, for each fixed $m\in {\Bbb N}$ the
limit sets $C(f_m, x_1^m)$ and $C(f_m, x_2^m)$ of mappings $f_m$ at
the corresponding boundary points $x_1^m, x_2^m\in \partial D$ lie
on $\partial D^{\,\prime}$ (see \cite[Proposition~13.5]{MRSY}).
Consequently, there is a point $z^m_1\in D\cap |\gamma^m_1|$ such
that ${\rm dist}\,(f_m(z_1^m),
\partial D^{\,\prime})<1/m.$ As $\overline{D^{\,\prime}}$ is compact, it can be assumed that the sequence
$f_m(z_1^m)\rightarrow p_1\in \partial D^{\,\prime}$ for
$m\rightarrow\infty.$ Similarly, there is a sequence $z^m_2\in D\cap
|\gamma^m_2|$ such that ${\rm dist}\,(f_m(z^m_2),
\partial D^{\,\prime})<1/m$ and
$f_m(z^m_2)\rightarrow p_2\in
\partial D^{\,\prime}$ for $m\rightarrow\infty.$

Let $P_m$ be the part of the interval $\gamma_1^m,$ enclosed between
the points $g_m(y_m)$ and $z^m_1,$ and $Q_m$ be the part of the
interval $\gamma_2^m,$ enclosed between the points $g_m(y_0)$ and
$z^m_2.$
By construction and by (\ref{eq13***}), ${\rm dist}\,(P_m,
Q_m)\geqslant\varepsilon_0>0.$ Let $\Gamma_m=\Gamma(P_m, Q_m, D),$
then the function
$$\rho(x)= \left\{
\begin{array}{rr}
\frac{1}{\varepsilon_0}, & x\in D,\\
0,  &  x\notin  D
\end{array}
\right. $$
is admissible for the family $\Gamma_m,$ since for an arbitrary
(locally rectifiable) curve $\gamma\in \Gamma_m$ it is completed
$\int\limits_{\gamma}\rho(x)|dx|\geqslant
\frac{l(\gamma)}{\varepsilon_0}\geqslant 1$ (where $l(\gamma)$
denotes the length of the curve $\gamma$). Since by the hypothesis
the mappings $f_m$ satisfy (\ref{eq2*!}) we obtain:
\begin{equation}\label{eq14***}
M(f_m(\Gamma_m))\leqslant \frac{1}{\varepsilon_0^n}\int\limits_{D}
Q(x)\,dm(x):=c<\infty\,,
\end{equation}
as $Q\in L^1(D).$
On the other hand, ${\rm diam}\,f_m(P_m)\geqslant |y_m-f_m(z^m_1)|
\geqslant (1/2)\cdot|y_0-p_1|>0$ and ${\rm diam}\,f_m(Q_m)\geqslant
|y_0-f_m(z^m_2)| \geqslant (1/2)\cdot|y_0-p_2|>0$ for large $m\in
{\Bbb N},$ in addition
$${\rm dist}\,(f_m(P_m), f_m(Q_m))\leqslant |y_m-y_0|\rightarrow 0,\quad m\rightarrow
\infty\,.$$ Then, in view of Lemma~\ref{lem2}
$$M(f_m(\Gamma_m))=M(f_m(P_m), f_m(Q_m), D^{\,\prime})\rightarrow\infty\,,\quad m\rightarrow\infty\,,$$
which contradicts relation (\ref{eq14***}). This contradiction
indicates that the assumption in (\ref{eq13***}) is erroneous, which
completes the proof of the theorem.~$\Box$

\section{On the behavior of mappings in the closure of a domain}

Let us pass to the question of the global behavior of mappings. The
following assertion indicates that for sufficiently good domains and
mappings with condition~(\ref{eq2*!}) the image of a fixed continuum
under these mappings can not approach the boundary of the
corresponding domain as soon as the Euclidean of the diameter of
this continuum is bounded from below (see also~\cite[Theorems~21.13
and 21.14]{Va}).

\begin{lemma}\label{lem3}
{\sl\, Suppose that the domain $D$ is locally path-connected on
$\overline{D},$ $\overline{D}$ and $\overline{D^{\,\prime}}$ are
compact sets in ${\Bbb R}^n,$ $n\geqslant 2,$ $D^{\,\prime}$ has a
weakly flat boundary, $Q\in L^1(D)$ and there is no connected
component of the boundary $\partial D^{\,\prime}$ degenerating to a
point. Let $f_m:D\rightarrow D^{\,\prime}$ be a sequence of
homeomorphisms of the domain $D$ onto the domain $D^{\,\prime}$ with
the condition~(\ref{eq2*!}). Let there also be a continuum $A\subset
D$ and a number $\delta>0$ such that ${\rm diam\,} f_m(A)\geqslant
\delta>0$ for all $m=1,2,\ldots .$ Then there exists $\delta_1>0$
such that
$${\rm dist\,}(f_m(A),
\partial D^{\,\prime})>\delta_1>0\quad \forall\,\, m\in {\Bbb
N}\,.$$}
\end{lemma}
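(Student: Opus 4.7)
The plan is by contradiction, in the spirit of the proof of Theorem~\ref{th1}. Suppose, after passing to a subsequence, that ${\rm dist\,}(f_m(A),\partial D^{\,\prime})\to 0$, so that there are $x_m\in A$ with $f_m(x_m)\to y_0\in\partial D^{\,\prime}$ and, by compactness, $x_m\to x_0\in A$. Let $K_0$ be the connected component of $\partial D^{\,\prime}$ containing $y_0$: by hypothesis $K_0$ is a nondegenerate continuum, so one fixes $y_0'\in K_0$ with $d_0:=|y_0-y_0'|>0$ and sets $U:=B(y_0,r_0)$ with $r_0<\min(\delta/4,d_0/2)$. Then the continuum $E_m:=f_m(A)$ has diameter $\geq\delta>2r_0$ and contains $f_m(x_m)\to y_0$, so it meets both $\partial U$ and every smaller $\partial V\subset U$ (with $V$ a neighborhood of $y_0$) once $m$ is large.

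The heart of the argument is the construction of a companion continuum $F_m\subset D^{\,\prime}$ with the same crossing property, whose preimage $B_m:=f_m^{\,-1}(F_m)\subset D$ stays at uniform positive distance from $A$. Following the source-side approach of Theorem~\ref{th1}, I would pick a point $c_0\in A$ with $c_0\neq x_0$ (possible since $A$ must be nondegenerate, else ${\rm diam\,}f_m(A)=0<\delta$) and invoke Lemma~\ref{lem1} to obtain disjoint Jordan arcs $\alpha_m\subset\overline{D}$ joining $x_m$ to a point $b\in\partial D$ and $\beta_m\subset\overline{D}$ joining $c_0$ to another point $d\in\partial D$. On $\beta_m$ there is $z^m$ with ${\rm dist\,}(f_m(z^m),\partial D^{\,\prime})<1/m$, and, passing to a further subsequence, $f_m(z^m)\to p\in\partial D^{\,\prime}$; the sub-arc of $\beta_m$ between $c_0$ and $z^m$ maps under $f_m$ to a continuum $F_m\subset D^{\,\prime}$ reaching near $p$. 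By appropriate choice of the arcs (using $x_m\to x_0$ and passing to a subsequence in which $\alpha_m$ converges to a fixed arc disjoint from $\beta_m$), one arranges ${\rm dist\,}(\alpha_m,\beta_m)\geq c>0$ uniformly in $m$. The weakly flat condition at $y_0$ (or at $p$, if $p\neq y_0$, after using the nondegeneracy of $\partial D^{\,\prime}$'s components to ensure that $f_m(A)$ also meets the corresponding small neighborhoods of $p$) then gives $M(\Gamma(E_m,F_m,D^{\,\prime}))\to\infty$; while $\rho=c^{-1}\chi_D$ is admissible for the source-side family $\Gamma(\alpha_m,\beta_m,D)$ and (\ref{eq2*!}) together with $Q\in L^1(D)$ yields
\[
M\bigl(\Gamma(E_m,F_m,D^{\,\prime})\bigr)\;\leq\;c^{-n}\,\|Q\|_{L^1(D)}<\infty,
\]
contradicting the previous unbounded growth.

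The main obstacle is ensuring that $F_m$ and $E_m$ reach a \emph{common} boundary point of $D^{\,\prime}$ so that the weakly flat condition can actually be applied to them, while simultaneously keeping the source-side preimage of $F_m$ uniformly away from $A$. The nondegeneracy of the components of $\partial D^{\,\prime}$ is used to secure the first requirement -- it gives the cluster point $p$ room to match $y_0$, or at least forces $f_m(A)$ to meet small neighborhoods of $p$ via a continuum threading along the nondegenerate component -- while Lemma~\ref{lem1} provides the disjointness (and hence the uniform positive distance $c$) needed for the second. The interplay of these two conditions, together with the careful extraction of the uniform constant $c>0$ as $x_m\to x_0$, is the main technical step of the proof.
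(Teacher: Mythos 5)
Your overall strategy (contradiction, two separated continua in the source domain, a constant admissible metric, an upper bound from (\ref{eq2*!}) against a lower bound from weak flatness) is the right skeleton, but the construction of the companion continuum has two concrete failures, and the second is exactly the point where the real work lies. First, your arc $\beta_m$ starts at $c_0\in A$, so the sub-arc $B_m$ of $\beta_m$ between $c_0$ and $z^m$ satisfies ${\rm dist}\,(B_m,A)=0$; the function $c^{-1}\chi_D$ is then not admissible for $\Gamma(A,B_m,D)$. Note that the relevant source-side family must indeed be $\Gamma(A,B_m,D)$ and not $\Gamma(\alpha_m,\beta_m,D)$: the image-side continuum you need to separate from $F_m$ is $E_m=f_m(A)$ (that is where the hypothesis ${\rm diam}\,f_m(A)\geqslant\delta$ enters), and its preimage is $A$, not $\alpha_m$. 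Second, and more seriously, $F_m$ accumulates at some boundary point $p$ coming from the cluster set of $f_m$ along $\beta_m$, while $E_m$ accumulates at $y_0$; weak flatness yields $M(\Gamma(E_m,F_m,D^{\,\prime}))>P$ only when both continua cross $\partial U$ and $\partial V$ for neighborhoods $U\supset V$ of a single common point, and nothing in your construction forces $p=y_0$ or forces $F_m$ to enter arbitrarily small neighborhoods of $y_0$. You flag this yourself as the main obstacle, but the remark that the nondegenerate component gives $p$ ``room to match $y_0$'' is not an argument.

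The paper resolves both difficulties at once by building the companion continuum on the \emph{image} side rather than the source side. It takes the component $K_0\ni y_0$ of $\partial D^{\,\prime}$ (nondegenerate by hypothesis, with ${\rm diam}\,K_0=m_0>0$), uses weak flatness to extend each inverse $g_{m_k}=f_{m_k}^{\,-1}$ continuously, hence uniformly continuously, to $\overline{D^{\,\prime}}$, and then, via the Herron--Koskela lemma, selects a connected set $U_k\cap D^{\,\prime}$ inside the $\delta_k$-neighborhood of $K_0$, where $\delta_k$ is the uniform-continuity modulus associated with $\varepsilon<(1/2)\cdot{\rm dist}\,(\partial D, A)$. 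A curve $\gamma_k$ in $U_k\cap D^{\,\prime}$ through points approximating $y_0$ and the two diameter-realizing points of $K_0$ then automatically (i) has diameter at least $m_0/2$ and passes near $y_0$, so it crosses $\partial U$ and $\partial V$ together with $f_{m_k}(A)$, and (ii) has preimage $g_{m_k}(|\gamma_k|)$ lying within $\varepsilon$ of $\partial D$, hence at distance greater than $\varepsilon$ from $A$, which is what makes $\varepsilon^{-1}\chi_D$ admissible for $\Gamma(g_{m_k}(|\gamma_k|),A,D)$. This is the mechanism missing from your proposal: both the uniform source-side separation and the common image-side accumulation point are extracted from the uniform continuity of the extended inverse maps near the nondegenerate boundary component, not from Lemma~\ref{lem1}.
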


\begin{proof}
Suppose, the contrary situation, that for each $k\in {\Bbb N}$ there
exists $m=m_k:$ ${\rm dist\,}(f_{m_k}(A),
\partial D^{\,\prime})<1/k.$ Without loss of generality we can assume that the sequence $m_k$ is monotonically increasing. By condition $\overline{D^{\,\prime}}$ is compact, therefore $\partial
D^{\,\prime}$ is also compact as a closed subset of the compactum
$\overline{D^{\,\prime}}.$ In addition, $f_{m_k}(A)$ is compact as a
continuous image of the compactum $A$ under the mapping $f_{m_k}.$
Then there are $x_k\in f_{m_k}(A)$ and $y_k\in
\partial D^{\,\prime}$ such that ${\rm dist\,}(f_{m_k}(A),
\partial D^{\,\prime})=|x_k-y_k|<1/k$ (see~picture~\ref{fig3}).
\begin{figure}[h]
\centerline{\includegraphics[scale=0.6]{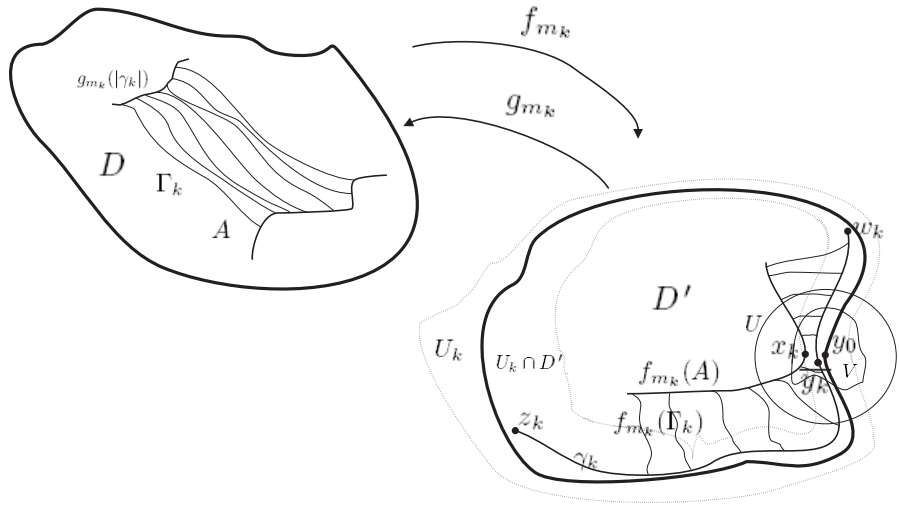}} \caption{To
the proof of Lemma~\ref{lem3}}\label{fig3}
\end{figure}
As $\partial D^{\,\prime}$ since $A$ is compact, we can assume that
$y_k\rightarrow y_0\in \partial D^{\,\prime},$ $k\rightarrow
\infty;$ then also
%
$$x_k\rightarrow y_0\in \partial D^{\,\prime},\quad k\rightarrow
\infty\,.$$
%
Let $K_0$ be a connected component $\partial D^{\,\prime},$
containing the point $y_0,$ then, obviously $K_0$ is a continuum in
${\Bbb R}^n.$ Since $D^{\,\prime}$ has a weakly flat boundary, for
each $k\in {\Bbb N}$ the mapping $g_{m_k}:=f_{m_k}^{\,-1}$ extends
to a continuous mapping
$\overline{g}_{m_k}:\overline{D^{\,\prime}}\rightarrow \overline{D}$
(see~\cite[Theorem~4.6]{MRSY}), furthermore , $\overline{g}_{m_k}$
uniformly continuous on $\overline{D^{\,\prime}}$ as a mapping that
is continuous on a compactum. Then for every $\varepsilon>0$ there
is $\delta_k=\delta_k(\varepsilon)<1/k$ such that
\begin{equation}\label{eq3}
|\overline{g}_{m_k}(x)-\overline{g}_{m_k}(x_0)|<\varepsilon \quad
\forall\,\, x,x_0\in \overline{D^{\,\prime}},\quad
|x-x_0|<\delta_k\,, \quad \delta_k<1/k\,.
\end{equation}
Let $\varepsilon>0$ be an arbitrary number with the condition
\begin{equation}\label{eq5}
\varepsilon<(1/2)\cdot {\rm  dist}\,(\partial D, A)\,,
\end{equation}
where $A$ is a continuum from the condition of the lemma. For each
fixed $k\in {\Bbb N}$ we consider the set
$$B_k:=\bigcup\limits_{x_0\in K_0}B(x_0, \delta_k)\,,\quad k\in {\Bbb N}\,.$$
Note that $B_k$ is an open set containing $K_0,$ in other words,
$B_k$ is a neighborhood of the continuum $K_0.$ In view of
\cite[Lemma~2.2]{HK} there exists a neighborhood $U_k\subset B_k$ of
the continuum $K_0,$ such that $U_k\cap D^{\,\prime}$ is connected.
Without loss of generality, we can assume that $U_k$ is an open set,
then $U_k\cap D^{\,\prime}$ is also linearly connected
(see~\cite[Proposition~13.1]{MRSY}). Let ${\rm diam}\,K_0=m_0,$ then
there exist $z_0, w_0\in K_0$ such that ${\rm
diam}\,K_0=|z_0-w_0|=m_0.$ Hence, we can choose sequences
$\overline{y_k}\in U_k\cap D^{\,\prime},$ $z_k\in U_k\cap
D^{\,\prime}$ and $w_k\in U_k\cap D^{\,\prime}$ such that
$z_k\rightarrow z_0,$ $\overline{y_k}\rightarrow y_0$ and
$w_k\rightarrow w_0$ for $k\rightarrow\infty.$ We can assume that
\begin{equation}\label{eq2}
|z_k-w_k|>m_0/2,\quad \forall\,\, k\in {\Bbb N}\,.
\end{equation}
We connect consecutively the points $z_k,$ $\overline{y_k}$ and
$w_k$ of the curve $\gamma_k$ in $U_k\cap D^{\,\prime}$ (this is
possible, since $U_k\cap D^{\,\prime}$ is path-connected). Let
$|\gamma_k|$ be, as usual, the carrier (image) of the curve
$\gamma_k$ in $D^{\,\prime}.$ Then $g_{m_k}(|\gamma_k|)$ is a
compact set in $D.$ Let $x\in|\gamma_k|,$ then there is $x_0\in
K_0:$ $x\in B(x_0, \delta_k).$ We will fix $\omega\in A\subset D.$
Because the $x\in|\gamma_k|,$ then $x$ is an interior point of the
domain $D^{\,\prime},$ so we have the right to write $g_{m_k}(x)$
instead of $\overline{g}_{m_k}(x)$ for the indicated $x.$ In this
case, from (\ref{eq3}) and (\ref{eq5}), in view of the triangle
inequality, for large $k\in {\Bbb N}$ we obtain:
$$|g_{m_k}(x)-\omega|\geqslant
|\omega-\overline{g}_{m_k}(x_0)|-|\overline{g}_{m_k}(x_0)-g_{m_k}(x)|\geqslant$$
\begin{equation}\label{eq4}
\geqslant {\rm  dist}\,(\partial D, A)-(1/2)\cdot{\rm
dist}\,(\partial D, A)=(1/2)\cdot{\rm  dist}\,(\partial D,
A)>\varepsilon\,.
\end{equation}
Passing to (\ref{eq4}) to $\ inf,$ over all $x\in |\gamma_k|$ and
all $\omega\in A,$ we obtain:
\begin{equation}\label{eq6}
{\rm dist}\,(g_{m_k}(|\gamma_k|), A)>\varepsilon, \quad\forall\,\,
k=1,2,\ldots \,.
\end{equation}
In view of (\ref{eq6}) the length of an arbitrary curve joining
compacta $g_{m_k}(|\gamma_k|)$ and $A$ in $D,$ not less than
$\varepsilon.$ Put $\Gamma_k:=\Gamma(g_{m_k}(|\gamma_k|), A, D),$
then the function $\rho(x)=1/\varepsilon$ for $x\in D$ and
$\rho(x)=0$ for $x\not\in D$ is admissible for $\Gamma_k,$ since
$\int\limits_{\gamma}\rho(x)|dx|\geqslant
\frac{l(\gamma)}{\varepsilon}\geqslant 1$ for $\gamma\in\Gamma_k$
(where $l(\gamma)$ denotes the length of the curve $\gamma$). By the
definition of mappings $f_{m_k}$ in (\ref{eq2*!}), we have:
\begin{equation}\label{eq4B}
M(f_{m_k}(\Gamma_k))\leqslant
\frac{1}{\varepsilon^n}\int\limits_DQ(x)\,dm(x)=c=c(\varepsilon,
Q)<\infty\,,
\end{equation}
Since by hypothesis $Q\in L^1(D).$

\medskip
We now show that we arrive at a contradiction with (\ref{eq4B}) in
view of the weak boundary plane $\partial D^{\,\prime}.$ We choose
at the point $y_0\in \partial D^{\,\prime}$ the ball $U:=B(y_0,
r_0),$ where $r_0>0$ and $r_0<\min\{\delta/4, m_0/4\},$ $\delta$ --
is a number from the condition of the lemma and ${\rm
diam}\,K_0=m_0.$ Notice, that $|\gamma_k|\cap U\ne\varnothing\ne
|\gamma_k|\cap (D^{\,\prime}\setminus U)$ for sufficiently large
$k\in{\Bbb N},$ because the ${\rm diam\,} |\gamma_k|\geqslant
m_0/2>m_0/4$ и $\overline{y_k}\in |\gamma_k|,$
$\overline{y_k}\rightarrow y_0$ for $k\rightarrow\infty.$ In view of
the same considerations $f_{m_k}(A)\cap U\ne\varnothing\ne
f_{m_k}(A)\cap (D^{\,\prime}\setminus U).$ As $|\gamma_k|$ and
$f_{m_k}(A)$ are continua, then
\begin{equation}\label{eq8}
f_{m_k}(A)\cap \partial U\ne\varnothing, \quad|\gamma_k|\cap
\partial U\ne\varnothing\,,
\end{equation}
see~\cite[Theorem~1.I, гл.~5, \S\, 46]{Ku}. For a fixed $P>0,$ let
$V\subset U$ is a neighborhood of the point $y_0,$ corresponding to
the definition of a weakly flat boundary, that is, such that for any
continua $E, F\subset D^{\,\prime}$ with condition $E\cap
\partial U\ne\varnothing\ne E\cap \partial V$ and $F\cap \partial
U\ne\varnothing\ne F\cap \partial V$ is satisfied the inequality
\begin{equation}\label{eq9}
M(\Gamma(E, F, D^{\,\prime}))>P\,.
\end{equation}
We note that for sufficiently large $k\in {\Bbb N}$
\begin{equation}\label{eq10}
f_{m_k}(A)\cap \partial V\ne\varnothing, \quad|\gamma_k|\cap
\partial V\ne\varnothing\,.
\end{equation}
Indeed $\overline{y_k}\in |\gamma_k|,$ $x_k\in f_{m_k}(A),$ where
$x_k, \overline{y_k}\rightarrow y_0\in V$ for $k\rightarrow\infty,$
therefore $|\gamma_k|\cap V\ne\varnothing\ne f_{m_k}(A)\cap V$ for
large $k\in {\Bbb N}.$ Besides ${\rm diam}\,V\leqslant {\rm
diam}\,U=2r_0<m_0/2$ and, since, ${\rm diam }|\gamma_k|>m_0/2$ in
view of (\ref{eq2}), then $|\gamma_k|\cap (D^{\,\prime}\setminus
V)\ne\varnothing.$ Then $|\gamma_k|\cap\partial V\ne\varnothing$
(see~\cite[Theorem~1.I, ch.~5, \S\, 46]{Ku}). Similarly, ${\rm
diam}\,V\leqslant {\rm diam}\,U=2r_0<\delta/2$ and, since ${\rm
diam}\,f_{m_k}(A)>\delta$ by hypothesis, then $f_{m_k}(A)\cap
(D^{\,\prime}\setminus V)\ne\varnothing.$ In view
of~\cite[Theorem~1.I, ch.~5, \S\, 46]{Ku} we have: $f_{m_k}(A)\cap
\partial V\ne\varnothing.$ The relations in (\ref{eq10})
are established.

\medskip
Thus, according to (\ref{eq8}), (\ref{eq9}) and (\ref{eq10}), we get
that
\begin{equation}\label{eq11}
M(\Gamma(f_{m_k}(A), |\gamma_k|, D^{\,\prime}))>P\,.
\end{equation}
Notice, that $\Gamma(f_{m_k}(A), |\gamma_k|,
D^{\,\prime})=f_{m_k}(\Gamma(A, g_{m_k}(|\gamma_k|),
D))=f_{m_k}(\Gamma_k),$ so that inequality (\ref{eq11}) can be
rewritten in the form
$$M(\Gamma(f_{m_k}(A), g_{m_k}(|\gamma_k|), D))=M(f_{m_k}(\Gamma_k))>P\,,$$
which contradicts inequality~(\ref{eq4B}). The resulting
contradiction indicates the incorrectness of the original assumption
${\rm dist\,}(f_{m_k}(A),
\partial D^{\,\prime})<1/k.$ The lemma is proved.~$\Box$
\end{proof}

\medskip
{\it Proof of Theorem~\ref{th2}}. Since $D^{\,\prime}$ has a weakly
flat boundary, each $g\in {\frak S}_{\delta, A, Q }(D,
D^{\,\prime})$ extends to a continuous mapping
$\overline{g}:\overline{D^{\,\prime}}\rightarrow \overline{D}$
(see~\cite[Theorem~4.6]{MRSY}).

We verify equality
$\overline{g}(\overline{D^{\,\prime}})=\overline{D}.$ In fact, by
definition
$\overline{g}(\overline{D^{\,\prime}})\subset\overline{D}.$ It
remains to show the converse inclusion $\overline{D}\subset
\overline{g}(\overline{D^{\,\prime}}).$ Let $x_0\in \overline{D},$
then we show that $x_0\in \overline{g}(\overline{D^{\,\prime}}).$ If
$x_0\in \overline{D},$ then either $x_0\in D,$ or $x_0\in
\partial D.$ If $x_0\in D,$ then there is nothing to prove, since by hypothesis
$\overline{g}(D^{\,\prime})=D.$  Now let $x_0\in \partial D,$ then
there be $x_k\in D$ and $y_k\in D^{\,\prime}$ such that
$x_k=\overline{g}(y_k)$ and $x_k\rightarrow x_0$ for
$k\rightarrow\infty.$ Since $\overline{D^{\,\prime}}$ is compact, we
can assume that $y_k\rightarrow y_0\in \overline{D^{\,\prime}}$ for
$k\rightarrow\infty.$ Since $f=g^{\,-1}$ is a homeomorphism, then
$y_0\in
\partial D^{\,\prime}.$ Since $\overline{g}^{\,-1}$ is continuous in $\overline{D^{\,\prime}},$
$\overline{g}(y_k)\rightarrow \overline{g}(y_0).$ However, in this
case, $\overline{g}(y_0)=x_0,$ since $\overline{g}(y_k)=x_k$ and
$x_k\rightarrow x_0,$ $k\rightarrow\infty.$ Hence, $x_0\in
\overline{g}(\overline{D^{\,\prime}}).$ The inclusion
$\overline{D}\subset \overline{g}(\overline{D^{\,\prime}})$ is
proved and, hence,
$\overline{D}=\overline{g}(\overline{D^{\,\prime}}),$ as required.

Equicontinuity of curve family ${\frak S}_{\delta, A, Q
}(\overline{D}, \overline{D^{\,\prime}})$ at interior points
$D^{\,\prime}$ is the result of the theorem~\ref{th1}. It remains to
show that this family is equicontinuous at the boundary points. We
carry out the proof by contradiction. Suppose we find a point
$z_0\in \partial D^{\,\prime},$ a number $\varepsilon_0>0$ and
sequences $z_m\in \overline{D^{\,\prime}},$ $z_m\rightarrow z_0$ for
$m\rightarrow\infty$ and $\overline{g}_m\in {\frak S}_{\delta, A, Q
}(\overline{D}, \overline{D^{\,\prime}})$ such that
\begin{equation}\label{eq12}
|\overline{g}_m(z_m)-\overline{g}_m(z_0)|\geqslant\varepsilon_0,\quad
m=1,2,\ldots .
\end{equation}
Put $g_m:=\overline{g}_m|_{D^{\,\prime}}.$ Since $g_m$ extends by
continuity to the boundary of $D^{\,\prime},$ we can assume that
$z_m\in D^{\,\prime}$ and, hence, $\overline{g}_m(z_m)=g_m(z_m).$ In
addition, there is one more sequence $z^{\,\prime}_m\in
D^{\,\prime},$ $z^{\,\prime}_m\rightarrow z_0$ for
$m\rightarrow\infty,$ such that
$|g_m(z^{\,\prime}_m)-\overline{g}_m(z_0)|\rightarrow 0$ for
$m\rightarrow\infty.$
%
Since $\overline{D}$ is compact, we can assume that the sequences
$g_m(z_m)$ and $\overline{g}_m(z_0)$ are convergent for
$m\rightarrow\infty.$ Let $g_m(z_m)\rightarrow \overline{x_1}$ and
$\overline{g}_m(z_0)\rightarrow \overline{x_2}$ for
$m\rightarrow\infty.$ By continuity of the modulus from (\ref{eq12})
it follows that $\overline{x_1}\ne \overline{x_2},$ moreover, since
the homeomorphisms preserve the boundary, $\overline{x_2}\in\partial
D.$ Let $x_1$ and $x_2$ be arbitrary distinct points of the
continuum $A,$ none of which coincide with с $\overline{x_1}.$ By
Lemma~\ref{lem1} we can join points $x_1$ and $\overline{x_1}$ by
the path $\gamma_1:[0, 1]\rightarrow \overline{D},$ and points $x_2$
and $\overline{x_2}$ by the curve $\gamma_2:[0, 1]\rightarrow
\overline{D}$ such that $|\gamma_1|\cap |\gamma_2|=\varnothing,$
$\gamma_i(t)\in D$ for all $t\in (0, 1),$ $i=1,2,$
$\gamma_1(0)=x_1,$ $\gamma_1(1)=\overline{x_1},$ $\gamma_2(0)=x_2$
and $\gamma_2(1)=\overline{x_2}.$ Since $D$ is locally connected on
its boundary there are neighborhoods $U_1$ and $U_2$ of points
$\overline{x_1}$ and $\overline{x_2},$ whose closures do not
intersect, such that $W_i:=D\cap U_i$ is a path-connected set. By
reducing the neighborhood $U_i,$  if necessary, we can assume that
$\overline{U_1}\cap|\gamma_2|=\varnothing=\overline{U_2}\cap|\gamma_1|.$
Without loss of generality, we can assume that $g_m(z_m)\in W_1$ and
$g_m(z^{\,\prime}_m)\in W_2$ for all $m\in {\Bbb N}.$ Let $a_1$ and
$a_2$ be arbitrary points belonging to $|\gamma_1|\cap W_1$ and
$|\gamma_2|\cap W_2.$ Let $t_1, t_2$ be such that
$\gamma_1(t_1)=a_1$ and $\gamma_2(t_2)=a_2.$ We connect the point
$a_1$ with the point $g_m(z_m)$ by the curve $\alpha_m:[t_1,
1]\rightarrow W_1$ such that $\alpha_m(t_1)=a_1$ and
$\alpha_m(1)=g_m(z_m).$ Similarly, we connect a point $a_2$ with the
point $g_m(z^{\,\prime}_m)$ by the curve $\beta_m:[t_2,
1]\rightarrow W_2$ such that $\beta_m(t_2)=a_2$ and
$\beta_m(1)=g_m(z^{\,\prime}_m)$ (see picture~\ref{fig4}).
\begin{figure}[h]
\centerline{\includegraphics[scale=0.6]{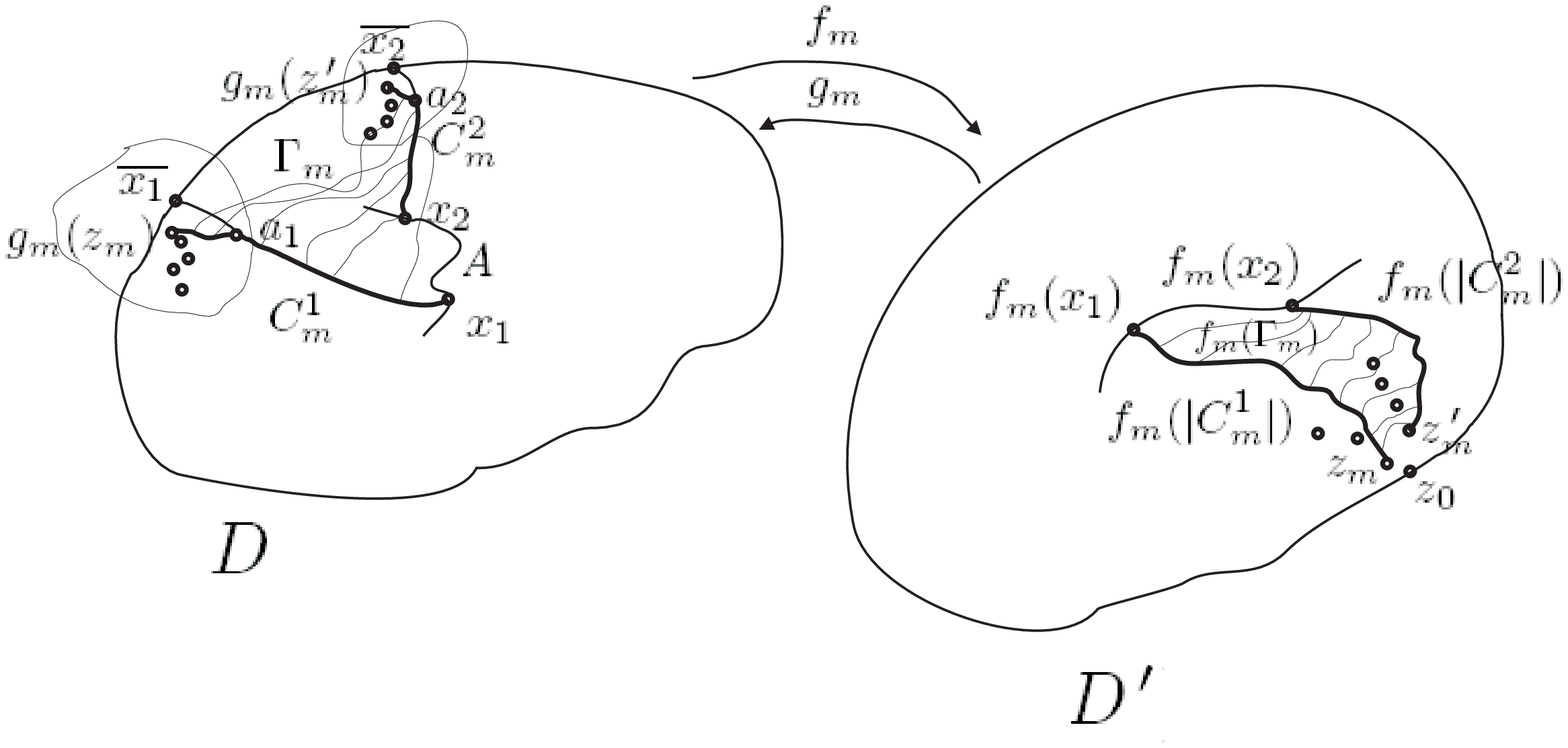}} \caption{To
the proof of the theorem~\ref{th2}}\label{fig4}
\end{figure}
\medskip
We now put
$$C^1_m(t)=\quad\left\{
\begin{array}{rr}
\gamma_1(t), & t\in [0, t_1],\\
\alpha_m(t), & t\in [t_1, 1]\end{array} \right.\,,\qquad
C^2_m(t)=\quad\left\{
\begin{array}{rr}
\gamma_2(t), & t\in [0, t_2],\\
\beta_m(t), & t\in [t_2, 1]\end{array} \right.\,.$$
Let, as usual, $|C^1_m|$ and $|C^2_m|$ are the carriers of the
curves $C^1_m$ and $C^2_m,$  respectively. We note that, by
construction, $|C^1_m|$ and $|C^2_m|$ are two disjoint continua in
$D,$ and ${\rm dist}\,(|C^1_m|, |C^2_m|)>l_0>0$ for all
$m=1,2,\ldots .$ You can take, for example, $$l_0=\min\{{\rm
dist}\,(|\gamma_1|, |\gamma_2|), {\rm dist}\,(|\gamma_1|, U_2), {\rm
dist}\,(|\gamma_2|, U_1), {\rm dist}\,(U_1, U_2)\}\,.$$
Now let $\Gamma_m$ be a family of curves connecting $|C^1_m|$ and
$|C^2_m|$ in $D.$ Then the function
$$\rho(x)= \left\{
\begin{array}{rr}
\frac{1}{l_0}, & x\in D\\
0,  &  x\notin  D
\end{array}
\right. $$
is admissible for the family $\Gamma_m,$ since
$\int\limits_{\gamma}\rho(x)|dx|\geqslant
\frac{l(\gamma)}{l_0}\geqslant 1$ for $\gamma\in\Gamma_m$ (where
$l(\gamma)$ denotes the length of the curve $\gamma$). By
hypothesis, the mappings $f_m,$ $f_m=g_m^{\,-1},$ satisfy
(\ref{eq2*!}) for $Q\in L^1(D),$ so that we obtain:
\begin{equation}\label{eq14A}
M(f_m(\Gamma_m))\leqslant \frac{1}{l_0^n}\int\limits_{D}
Q(x)\,dm(x):=c=c(l_0, Q)<\infty\,.
\end{equation}
On the other hand, by Lemma~\ref{lem3} there is a number
$\delta_1>0$ such that${\rm dist}\,(f_{m}(A), \partial
D^{\,\prime})>\delta_1>0,$ $m=1,2,\ldots \,.$ From this we get that
$${\rm diam}\,f_m(|C^1_m|)\geqslant |z_m-f_m(x_1)| \geqslant
(1/2)\cdot {\rm dist}\,(f_m(A), \partial
D^{\,\prime})>\delta_1/2\,,$$
\begin{equation}\label{eq14}
{\rm diam}\,f_m(|C^2_m|)\geqslant |z^{\,\prime}_m-f_m(x_2)|
\geqslant (1/2)\cdot {\rm dist}\,(f_m(A), \partial
D^{\,\prime})>\delta_1/2
\end{equation}
for some $M_0\in {\Bbb N}$ and for all $m\geqslant M_0.$
%
We choose at the point $z_0\in \partial D^{\,\prime}$ the ball
$U:=B(z_0, r_0),$ where $r_0>0$ and $r_0<\delta_1/4,$ where
$\delta_1$ is the number of the relations in (\ref{eq14}). Notice,
that $f_m(|C^1_m|)\cap U\ne\varnothing\ne f_m(|C^1_m|)\cap
(D^{\,\prime}\setminus U)$ for sufficiently large $m\in{\Bbb N},$
since ${\rm diam\,} f_m(|C^1_m|)\geqslant \delta_1/2$ и $z_m\in
f_m(|C^1_m|),$ $z_m\rightarrow z_0$ for $m\rightarrow\infty.$ In
view of the same considerations $f_m(|C^2_m|)\cap U\ne\varnothing\ne
f_m(|C^2_m|)\cap (D^{\,\prime}\setminus U).$ Since $f_m(|C^1_m|)$
and $f_m(|C^2_m|)$ are continua
\begin{equation}\label{eq8A}
f_m(|C^1_m|)\cap \partial U\ne\varnothing, \quad f_m(|C^2_m|)\cap
\partial U\ne\varnothing\,,
\end{equation}
see~\cite[Theorem~1.I, ch.~5, \S\, 46]{Ku}. For a fixed $P>0,$ let
further $V\subset U$ be a neighborhood of the point $z_0,$
corresponding to the definition of a weakly flat boundary, that is,
such that for any continua $E, F\subset D^{\,\prime}$ with the
condition $E\cap
\partial U\ne\varnothing\ne E\cap \partial V$ и $F\cap \partial
U\ne\varnothing\ne F\cap \partial V$ the inequality holds
\begin{equation}\label{eq9A}
M(\Gamma(E, F, D^{\,\prime}))>P\,.
\end{equation}
We note that for sufficiently large $m\in {\Bbb N}$
\begin{equation}\label{eq10A}
f_m(|C^1_m|)\cap \partial V\ne\varnothing, \quad f_m(|C^2_m|)\cap
\partial V\ne\varnothing\,.\end{equation}
Indeed, $z_m\in f_m(|C^1_m|),$ $z^{\,\prime}_m\in f_m(|C^2_m|),$
where $z_m, z^{\,\prime}_m\rightarrow z_0\in V$ for
$m\rightarrow\infty,$ therefore $f_m(|C^1_m|)\cap V\ne\varnothing\ne
f_m(|C^2_m|)\cap V$ for large $m\in {\Bbb N}.$ In addition, ${\rm
diam}\,V\leqslant {\rm diam}\,U=2r_0<\delta_1/2$ and, since ${\rm
diam }f_m(|C^1_m|)>\delta_1/2$ in view of (\ref{eq14}), then
$f_m(|C^1_m|)\cap (D^{\,\prime}\setminus V)\ne\varnothing.$ Then
$f_m(|C^1_m|)\cap\partial V\ne\varnothing$ (see~\cite[Theorem~1.I,
ch.~5, \S\,~46]{Ku}). Similarly, ${\rm diam}\,V\leqslant {\rm
diam}\,U=2r_0<\delta_1/2$ and since ${\rm
diam}\,f_m(|C^2_m|)>\delta$ in view of (\ref{eq14}), then
$f_m(|C^2_m|)\cap (D^{\,\prime}\setminus V)\ne\varnothing.$ Then
by~\cite[Theorem~1.I, ch.~5, \S\, 46]{Ku} we have:
$f_m(|C^1_m|)\cap\partial V\ne\varnothing.$ Thus, (\ref{eq10A}) is
proved.

\medskip
According to (\ref{eq9A}) and taking into account (\ref{eq8A}) and
(\ref{eq10A}), we get that
%
$$M(f_m(\Gamma_m))=M(\Gamma(f_m(|C^1_m|), f_m(|C^2_m|),
D^{\,\prime}))>P\,,$$
%
which contradicts inequality~(\ref{eq14A}). This contradiction
indicates the incorrectness of the original assumption made in
(\ref{eq12}). The theorem is proved.~$\Box$

\section{Some examples}
We begin with a simple example of mappings on the complex plane.

{\bf Example~1.} As it is known, the linear-fractional automorphisms
of the unit disk ${\Bbb D}\subset{\Bbb C}$ onto itself are given by
the formula $f(z)=e^{i\theta}\frac{z-a}{1-\overline{a}z},$ $z\in
{\Bbb D},$ $a\in{\Bbb C},$ $|a|<1,$ $\theta\in [0, 2\pi).$ The
indicated mappings $f$ are 1-homeomorphisms; all the conditions of
Theorem~\ref{th2} are satisfied, except for the condition ${\rm
diam}\,f(A)\geqslant\delta,$ which, in general, can be violated.

If, for example, $\theta=0$ and $a=1/n,$ $n=1,2,\ldots,$ then
$f_n(z)=\frac{z-1/n}{1-z/n}=\frac{nz-1}{n-z}.$ Let $A=[0, 1/2],$
then $f_n(0)=-1/n\rightarrow 0$ and
$f_n(1/2)=\frac{n-2}{2n-1}\rightarrow 1/2,$ $n\rightarrow\infty.$
Hence we see that the sequence $f_n$ satisfies condition ${\rm
diam}\,f_n(A)\geqslant\delta,$ for example, for $\delta=1/4.$ By
direct calculations we are convinced that
$f_n^{\,-1}(z)=\frac{z+1/n}{1+z/n}$ and, hence, $f_n^{\,-1}$
converge uniformly to $f^{\,-1}(z)\equiv z.$ Thus, the sequence
$f_n^{\,-1}(z)$ is equicontinuous in $\overline{{\Bbb D}}.$

\medskip
If we put
$f^{\,-1}_n(z)=\frac{z-(n-1)/n}{1-z(n-1)/n}=\frac{nz-n+1}{n-nz+1},$
then, as it is easy to see, such a sequence converges locally
uniformly to $-1$ inside ${\Bbb D};$ in the same time,
$f^{\,-1}_n(1)=1.$ Taking this into account, by direct computations,
we conclude that the sequence $f^{\,-1}_n$ is not equicontinuous at
the point 1; in this case $f_n(z)=\frac{z+(n-1)/n}{1+z(n-1)/n}$ and
condition ${\rm diam}\,f_n(A)\geqslant\delta$ for any $\delta>0,$
not depending on $n,$ can not be satisfied in view of
Theorem~\ref{th2}.

\medskip
From what has been said, it follows that {\it in the conditions of
Theorem~\ref{th2}, in general, one can not refuse the additional
requirement that ${\rm diam}\,f(A)\geqslant\delta,$.}

\medskip
{\bf Example~2.} Let $p\geqslant 1$ be so large that the number
$n/p(n-1)$ is less than 1, and let, in addition, $\alpha\in (0,
n/p(n-1))$ be an arbitrary number. We define the sequence of
mappings $f_m: {\Bbb B}^n\rightarrow B(0, 2)$ of the ball ${\Bbb
B}^n$ onto the ball $B(0, 2)$ in the following way:
$$f_m(x)\,=\,\left
\{\begin{array}{rr} \frac{1+|x|^{\alpha}}{|x|}\cdot x\,, & 1/m\leqslant|x|\leqslant 1, \\
\frac{1+(1/m)^{\alpha}}{(1/m)}\cdot x\,, & 0<|x|< 1/m \ .
\end{array}\right.
$$
Notice, that $f_m$ satisfies (\ref{eq2*!}) for
$Q=\left(\frac{1+|x|^{\,\alpha}}{\alpha
|x|^{\,\alpha}}\right)^{n-1}\in L^1({\Bbb B}^n)$ (see~\cite[proof of
Theorem~7.1]{Sev$_3$}) and that $B(0, 2)$ has a weakly flat boundary
(see~\cite[Lemma~4.3]{Vu}). By construction of the mappings $f_m$
fixes an infinite number of points of the unit ball for all
$m\geqslant 2.$

We establish the equicontinuous of mappings $g_m:=f_m^{\,-1}$ in
$\overline{B(0, 2)}$ (for convenience we use the notation $g_m$ also
for continuous extension of the mapping $g_m$ in $\overline{B(0,
2)}$). It is not hard to see that
$$g_m(y):=f^{-1}_m(y)\,=\,\left
\{\begin{array}{rr} \frac{y}{|y|}(|y|-1)^{1/\alpha}\,, & 1+1/m^{\alpha}\leqslant|y|< 2, \\
\frac{(1/m)}{1+(1/m)^{\alpha}}\cdot y\,, & 0<|y|< 1+1/m^{\alpha} \ .
\end{array}\right.
$$
The mappings $g_m$ map $B(0, 2)$ onto ${\Bbb B}^n.$ We fix $y_0\in
\overline{B(0, 2)}.$ The following three situations are possible:

\medskip
1) $|y_0|<1.$ We choose $\delta_0=\delta_0(y_0)$ such that
$\overline{B(y_0, \delta_0)}\subset B(0, 1).$ For the number
$\varepsilon>0$ we put $\delta_1=\delta_1(\varepsilon, y_0):=\min\{
\delta_0, \varepsilon\}.$ In this case, with $y\in \overline{B(y_0,
\delta_1)}$ and all $m=1,2,\ldots$ we have that
$|g_m(y)-g_m(y_0)|=\frac{(1/m)}{1+(1/m)^{\alpha}}|y-y_0|<|y-y_0|<\varepsilon,$
which proves the equicontinuity of the family $g_m$ at the point
$y_0.$

\medskip
2) $|y_0|>1.$ 
By the definition of mappings $g_m$ one can find $m_0=m_0(y_0)\in
{\Bbb N}$ and $\delta_0=\delta_0(y_0)>0$ such that
$g_m(y)=\frac{y}{|y|}(|y|-1)^{1/\alpha}$ for all $\overline{B(y_0,
\delta_0)}\cap \overline{B(0, 2)}$ and all $m\geqslant m_0.$ We take
$\varepsilon>0.$ Putting $g(y)=\frac{y}{|y|}(|y|-1)^{1/\alpha},$ we
note that $|g_m(y)-g_m(y_0)|=|g(y)-g(y_0)|<\varepsilon$ for
$m\geqslant m_0$ and some
$\overline{\delta}=\overline{\delta}(\varepsilon, y_0),$
$\overline{\delta}<\delta_0$, since the mapping
$g(y)=\frac{y}{|y|}(|y|-1)^{1/\alpha}$ since a is continuous in
$\overline{B(0, 2)}.$

\medskip
3) Finally, consider the <<borderline>> case $y_0\in {\Bbb
S}^{n-1}=\partial {\Bbb B}^n.$ Let $\delta_0=\delta_0(y_0)$ be such
that $\overline{B(y_0, \delta_0)}\subset B(0, 2).$ By definition, we
have $g_m(y_0)=\frac{(1/m)}{1+(1/m)^{\alpha}}\cdot y_0,$
$m=1,2,\ldots .$ Notice, that
$$|g_m(y)-g_m(y_0)|\leqslant\max\left\{\left|\frac{(1/m)}{1+(1/m)^{\alpha}}\cdot
y_0-\frac{y}{|y|}(|y|-1)^{1/\alpha}\right|,
\frac{(1/m)}{1+(1/m)^{\alpha}}|y-y_0|\right\}\,.$$
For the number $\varepsilon>0$ we find the number
$m_1=m_1(\varepsilon)>0,$ such that $1/m<\varepsilon/2.$ Put
$\overline{\delta_0}=\overline{\delta_0}(\varepsilon, y_0)=\min\{1,
\varepsilon/2,\delta_0\}.$ Using the triangle inequality, and the
fact that $1/\alpha>1,$ we get:
$\left|\frac{y}{|y|}(|y|-1)^{1/\alpha}-
\frac{(1/m)}{1+(1/m)^{\alpha}}\cdot y_0 \right|\leqslant
(|y|-1)^{1/\alpha}+1/m<\varepsilon/2+\varepsilon/2=\varepsilon$ for
$m>m_1$ and $|y-y_0|<\overline{\delta_0}.$ The last relation for
$1\leqslant m\leqslant m_1$ is also satisfied for $|y-y_0|<\delta_m$
and some $\delta_m=\delta_m(\varepsilon, y_0)>0$ in view of the
continuity of the mappings $g_m.$  Obviously, the same way
$\frac{(1/m)}{1+(1/m)^{\alpha}}|y-y_0|<\varepsilon$ for
$|y-y_0|<\overline{\delta_0}$ and all $m=1,2,\ldots .$ Finally, we
have: $|g_m(y)-g_m(y_0)|<\varepsilon$ for all $m\in {\Bbb N}$ and
$y\in B(y_0, \delta),$ where $\delta:=\{\overline{\delta}_0,
\delta_1,\ldots, \delta_{m_1}\}.$ Equicontinuity of $g_m$ in
$\overline{B(0, 2)}$ is established.

\medskip
It should be noted that the family $\frak G=\{g_m\}_{m=1}^{\infty}$
is equicontinuous in $B(0, 2)$, and the family <<inverse>> to it is
not $\frak F=\{f_m\}_{m=1}^{\infty}$ (indeed,
$|f_m(x_m)-f(0)|=1+1/m\not\rightarrow 0$ for $m\rightarrow\infty,$
where $|x_m|=1/m$).

\medskip
{\it The family $\frak G$ contains an infinite number of mappings
$g_{m_k}:=f^{\,-1}_{m_k},$ $f_{m_k}\in \frak F,$ that do not satisfy
the relation (\ref{eq2*!})}. In fact, otherwise, according to
Theorem~\ref{th1} <<the inverse>>  to $\frak G$ family $\frak F$
would be equicontinuous in ${\Bbb B}^n.$


\medskip
\medskip
{\bf \noindent Evgeny Sevost'yanov, Sergei Skvortsov} \\
Zhytomyr Ivan Franko State University,  \\
40 Bol'shaya Berdichevskaya Str., 10 008  Zhytomyr, UKRAINE \\
Phone: +38 -- (066) -- 959 50 34, \\
Email: esevostyanov2009@gmail.com

\end{document}